\documentclass[a4paper,10pt]{amsart}
\usepackage{amssymb}
\usepackage[all,cmtip]{xy}
\usepackage{amsmath}
\usepackage{chemarrow}
\usepackage{graphicx}
\usepackage{tikz}
\usetikzlibrary{cd,nfold}%
\tikzcdset{diagrams={/tikz/double/.append style=/tikz/nfold}}
\usepackage[colorlinks, 
pdfborder={0 0 1}, 
linkcolor=cyan,
anchorcolor=magenta
citecolor=green
]{hyperref}
\usepackage{mathrsfs}
\usepackage{titletoc}
\usepackage{bm}
\usepackage{bbm}
\usepackage{bbold} 
\usepackage{cite}
\usepackage{subfiles}
\usepackage{extarrows} 
\usepackage[hmarginratio=1:1]{geometry}
\usepackage{colordvi}
\usepackage{color}
\usepackage{stmaryrd}
\usepackage{multirow} 

\allowdisplaybreaks

\DeclareMathOperator{\hh}{H}

\DeclareMathOperator{\kk}{\Bbbk}

\DeclareMathOperator{\N}{\mathbb{N}}

\DeclareMathOperator{\Tor}{Tor}

\DeclareMathOperator{\Z}{\mathbb{Z}}

\numberwithin{equation}{section}

\theoremstyle{definition}

\newtheorem{thm}{Theorem}[section]
\newtheorem{prop}[thm]{Proposition}
\newtheorem{lem}[thm]{Lemma}

\newtheorem{defn}[thm]{Definition}

\newtheorem{ques}[thm]{Question}
\newtheorem{ex}[thm]{Example}

\begin{document}

\title{A note on $N$-Koszul algebras of finite global dimension}

\author{Zeyu Chen}
\address{School of Mathematical Sciences, Zhejiang University, Hangzhou 310058, China}
\email{zeyu98.chen@gmail.com}

\author{Silu Liu}
\address{School of Mathematical Sciences, Fudan University, Shanghai 200433, China}
\email{liusl20@fudan.edu.cn}

\keywords{N-Koszul algebra, Artin--Schelter regular algebra, Hilbert series}
\subjclass[2020]{16W50,16E65,16S38}

\begin{abstract}
    In this note, we prove that, under Kabbaj's weighted polynomial Hilbert-series hypothesis, every $N$-Koszul graded algebra of finite global dimension with $N>2$ is necessarily $3$-Koszul of global dimension $3$.
\end{abstract}

\maketitle

\section{Introduction}

In the development of noncommutative algebraic geometry, Artin and Schelter introduced a class of noncommutative regular algebras\cite{AS87}, which were recognized as Artin--Schelter regular algebras.
 
In the classification of Artin--Schelter regular algebras of global dimension $3$, algebras generated in degree one are divided into two kinds: algebras generated by three variables with three quadratic generating relations, and algebras generated by two variables with two cubic generating relations. The former are Koszul algebras, and the latter are  $3$-Koszul algebras in Berger's terminology. Precisely, Berger introduced $N$-Koszul algebras (see Definition \ref{def N-Koszul}) as a higher-degree analogue of Koszul algebras. Throughout we exclude the trivial case $A=\kk[x]$, which is $N$-Koszul for every $N\geqslant 2$ under Berger's definition.

Despite the extensive literature concerning $N$-Koszul Artin--Schelter regular algebras\cite{DV07,MS16}, for $N>2$, it appears that the only known examples are $3$-Koszul algebras of global dimension $3$, which led Kabbaj to investigate the numerical restrictions for $N$-Koszul algebras \cite{Kab25}. Kabbaj worked under the following hypothesis.

\begin{equation}\label{hypo}\tag{\textbf{H}}  
    \begin{split}
        _A\kk \textit{ has a finite free resolution, and the Hilbert series of $A$ }\\
        \textit{ is of form } \hh_A(t)=\frac{1}{\prod_{i=1}^m (1-t^i)^{n_i}}, \textit{ where } n_i\in \N, n_m>0.
    \end{split} 
\end{equation}

It is worth mentioning that for an Artin--Schelter regular algebra $A$, $_A\kk$ always has a finite free resolution (that is, a free resolution of finite length where each term is finitely generated)\cite{SZ97}. In addition, the Hilbert series of $A$ is conjectured to be of the above form. See \cite[Question 3.3]{Rog24} for detailed information.

\begin{thm}\cite[Theorem 1.2]{Kab25}\label{thm Kabbaj}
    Let $A$ be an $N$-Koszul algebra of global dimension $d$ satisfying hypothesis \eqref{hypo}, where $N>2$. Then
    \begin{enumerate}
        \item $N$ must be a prime number;
        \item $d\equiv 3 \text{ (mod 4) }$;
        \item $d\geqslant \frac{2^N-4}{N}+1$.
    \end{enumerate}
\end{thm}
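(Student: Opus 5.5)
The plan is to turn all three conclusions into combinatorial constraints on one polynomial. First I would set up the standard numerics of an $N$-Koszul algebra $A=T(V)/(R)$ with $R\subseteq V^{\otimes N}$. The minimal resolution of ${}_A\kk$ has terms $A\otimes J_i$, where $J_i$ is concentrated in degree $\varphi(i)$, with $\varphi(2k)=kN$ and $\varphi(2k+1)=kN+1$. Since $\operatorname{gldim}A=d$ and the resolution is finite, this gives
\[
P(t):=\hh_A(t)^{-1}=\sum_{i=0}^{d}(-1)^i\dim J_i\, t^{\varphi(i)},\qquad J_0=\kk,\ J_1=V,\ J_2=R,\ J_d\neq 0 .
\]
Hypothesis \eqref{hypo} says that $P(t)=\prod_{i=1}^m(1-t^i)^{n_i}$. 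Every exponent of $P$ is therefore $\equiv 0$ or $1 \pmod N$, and the degree $D:=\sum_i i n_i$ equals $\varphi(d)$.

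Next I would extract two structural facts. The first is symmetry: each factor $1-t^i$ is anti-palindromic, so $t^D P(1/t)=(-1)^{\sum n_i}P(t)$. In particular $t^{D-1}$ occurs in $P$ with coefficient $\pm n_1=\pm\dim V\neq 0$. Both $D$ and $D-1$ are exponents, hence lie in $\{kN, kN+1\}$ with $N>2$. This forces $D\equiv 1\pmod N$, so $d$ is odd, say $d=2k+1$ and $D=kN+1$. Comparing the constant and leading coefficients also gives $\dim J_d=1$ and $(-1)^{\sum n_i}=(-1)^d=-1$.

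The second fact is a logarithmic expansion. Write $tP'/P=-\sum_{m\ge1}c_m t^m$ with $c_m=\sum_{i\mid m} i\,n_i$. The vanishing of the coefficients of $t^2,\dots,t^{N-1}$ in $P$ means $P\equiv 1-n_1t \pmod{t^N}$. Hence $c_m=n_1^m$ for $m<N$, and by M\"obius inversion $n_i=\frac1i\sum_{e\mid i}\mu(i/e)n_1^e$ is the number of Lyndon words of length $i$ on $n_1$ letters, for all $i<N$. Continuing the same comparison through degrees $N$, $N+1$ and then each window $kN+2,\dots,(k+1)N-1$ gives a recursive system: the $n_i$ must make $\prod(1-t^i)^{n_i}$ vanish in every residue class mod $N$ other than $0,1$.

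With this in hand, the three claims would go as follows.

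\textbf{(3).} Because $n_1\ge 2$ (we exclude $\kk[x]$), we get $D\ge\sum_{i<N} i\,n_i\ge\sum_{i<N}\sum_{e\mid i}\mu(i/e)2^e$. I would then bound this from below, together with the contribution forced in degree $N$ (where the count of necklaces of length $N$ is $(2^N-2)/N$ when $N$ is prime), and compare with $D=kN+1=\tfrac{d-1}{2}N+1$. This should rearrange to $d\ge\frac{2^N-4}{N}+1$.

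\textbf{(1).} If $N=ab$ with $1<a<N$, I would evaluate $P$ at a primitive $a$-th root of unity $\zeta$, or equivalently compare the $m=a$ and $m=N$ power-sum identities. On exponents $\equiv 0,1\pmod N$ we have $\zeta^{kN}=1$ and $\zeta^{kN+1}=\zeta$, so $P(\zeta)=\sum_i(-1)^i\dim J_i\,\zeta^{\varphi(i)}$ collapses to $P_0(1)+\zeta P_1(1)$, where $P_0,P_1$ are the even and odd parts. Meanwhile $P(\zeta)=0$ exactly when some $n_i\neq0$ with $a\mid i$, and the multiplicities are controlled by the $n_i$ already determined as necklace counts. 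Playing the multiplicity of $\zeta$ as a root against that of $1$ (which is $\sum n_i$), using the Lyndon values of $n_a$ and $n_{2a}$, should give a contradiction.

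\textbf{(2).} I would differentiate the symmetry relation at $t=1$, or evaluate at $t=-1$ and $t=i$. The factors $1-t^i$ contribute predictable signs and the symmetry pairs exponent $kN+1$ with $kN'$, and together with $\sum n_i$ odd this should pin $d$ modulo $4$. If the mod-$4$ information needs more than parity, I would compute $P(i)$ in two ways. From the product it is $\prod(1-i^j)^{n_j}$, whose argument is a multiple of $\pi/4$ determined by the $n_j \bmod 4$. From the resolution it is $\sum(-1)^j\dim J_j\, i^{\varphi(j)}$, which the symmetry $j\leftrightarrow d-j$ simplifies.

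I expect the main obstacle to be (1) and (2). These need a sharp root-of-unity argument rather than a bound, and the step I am least sure of is which root of unity and which coefficient comparison yields the contradiction. I would first try to make the recursive vanishing conditions in the residue classes $2,\dots,N-1$ explicit up to degree about $2N$, and look there for the obstruction when $N$ is composite.
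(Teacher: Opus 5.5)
Your setup is correct: the symmetry argument forces $D=kN+1$, $d=2k+1$, $\dim J_d=1$ and $\sum n_i$ odd, and the logarithmic derivative identifies $n_i$ for $i<N$ as Lyndon counts. Part (3) also works, and more simply than you plan. Since $D\geqslant\sum_{i\mid N-1} i\,n_i=c_{N-1}=n_1^{N-1}\geqslant 2^{N-1}$, you get $kN+1\geqslant 2^{N-1}$ and hence $d\geqslant\frac{2^N-2}{N}+1$, which implies the stated bound. Do not add a ``degree-$N$ contribution'' to $D$. For prime $N$ the necklace number $(n_1^N-n_1)/N$ is $\dim J_2$, not $n_N$, and in fact $n_N=0$.

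The real gap is that (1) and (2) are not proved, and the mechanisms you sketch miss the decisive point. For (1) there is no multiplicity to balance. From $P(1)=0$ you get $P_1(1)=-P_0(1)$, so for every $\zeta\neq 1$ with $\zeta^N=1$,
\[P(\zeta)=P_0(1)(1-\zeta)\neq 0,\qquad P_0(1)=\sum_{j\ \mathrm{even}}\dim J_j\geqslant 1 .\]
Hence $n_i=0$ whenever $\gcd(i,N)>1$. If $N=ab$ with $1<a<N$, this contradicts $n_a>0$, since $n_a$ is a Lyndon count with $n_1\geqslant 2$. You need neither $n_{2a}$ nor any comparison with the multiplicity at $1$.

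For (2), the missing ingredient is that $S:=P_0(1)$ is odd, and none of your suggested evaluations produces it. Write $P(t)=G(t^p)-tH(t^p)$ with $H(x)=x^kG(1/x)$.
\begin{enumerate}
\item Differentiating the symmetry relation at $t=1$ only gives the tautology $D\,P(1)=0$.
\item Evaluating at $t=-1$ gives $(1+(-1)^k)G(-1)=0$. This excludes even $k$ only once you know $G(-1)\equiv S$ is odd.
\item $P(\sqrt{-1})=0$ as soon as $n_4>0$ (for instance whenever $N\geqslant 5$), so it carries no information.
\end{enumerate}
What works is the following. With $N=p$ prime and $n_{jp}=0$ from the previous step, take the norm:
\[pS^{p-1}=\prod_{i=1}^{p-1}P(\omega^i)=p^{\sum n_i},\]
so $S$ is a power of $p$ and hence odd. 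The root $t=1$ has multiplicity $\sum n_i\geqslant n_1\geqslant 2$, so $P'(1)=0$, which gives $2p\,G'(1)=S(kp+1)$. This forces $kp+1$ to be even, so $k$ is odd and $d=2k+1\equiv 3\pmod 4$.

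The paper does not prove this theorem; it quotes it from Kabbaj. Still, these are exactly the devices it uses in Section 3: the identity $f(\omega^i)=S(1-\omega^i)$, the equality $pS^{p-1}=p^{N_0}$, and the computation of $G'(1)$ that yields ``$k$ is odd''.
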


Following Kabbaj's approach, we show that these restrictions can in fact be pushed to their extreme: under the same hypothesis, no higher-dimensional case can occur.

\begin{thm}\label{thm no-more}
    Let $A$ be an $N$-Koszul algebra of global dimension $d$ satisfying hypothesis \eqref{hypo}, where $N>2$. Then $A$ is $3$-Koszul of global dimension $3$.
\end{thm}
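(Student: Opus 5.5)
The plan is to reduce everything to the polynomial $p(t)=\hh_A(t)^{-1}$, which under \eqref{hypo} has two descriptions. By \eqref{hypo}, $p(t)=\prod_{i=1}^m(1-t^i)^{n_i}$. By $N$-Koszulity, the minimal resolution of ${}_A\kk$ has its $j$-th term generated in degree $\delta(j)$, where $\delta(2k)=kN$ and $\delta(2k+1)=kN+1$. Hence
\[
p(t)=\sum_{k}a_{2k}t^{kN}-\sum_k a_{2k+1}t^{kN+1}=F(t^N)+tG(t^N),
\]
where $a_j$ is the rank of the $j$-th term of the resolution. By Theorem \ref{thm Kabbaj}, $N$ is prime and $d$ is odd, so $a_d\neq 0$ sits in degree $D:=\deg p=\tfrac{(d-1)N}{2}+1=\sum_i i\,n_i$. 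Thus $\deg F(t^N)=\tfrac{(d-1)N}{2}$ and $\deg tG(t^N)=D$. The goal is to use the rigid shape ``only exponents $\equiv 0,1 \pmod N$'' against the cyclotomic factorization of $p$, and so obtain an upper bound on $d$ in terms of $N$. That bound should contradict $d\geqslant\frac{2^N-4}{N}+1$ unless $N=3$, and then force $d=3$.

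The key device is twisting by a primitive $N$-th root of unity $\omega$. For a root of unity $\zeta$ whose order $q$ is coprime to $N$, write $e_q=\sum_{q\mid i}n_i$ for the multiplicity of $\zeta$ as a root of $p$. Since $N$ is prime, $\zeta\omega^j$ ($j\neq 0$) has order $qN$ and is a root of multiplicity $e_{qN}\leqslant e_q$. Now $F(t^N)$ and $tG(t^N)$ are, up to the scalar $\omega^j$ on the second, invariant under $t\mapsto\omega^j t$. Let $f_\zeta,g_\zeta$ be their orders of vanishing at $\zeta$, which are the same at every $\zeta\omega^j$. Comparing $p(t)=F+tG$ with $p(\omega^j t)=F+\omega^j tG$ at $\zeta$ shows two things. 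If $f_\zeta\neq g_\zeta$, then $e_q=e_{qN}=\min(f_\zeta,g_\zeta)$. If $f_\zeta=g_\zeta$, then either no cancellation occurs (again $e_q=e_{qN}$), or the leading terms cancel in $p$, giving $e_{qN}=f_\zeta<e_q$. The case $q=1$, with $e_1=\sum n_i$ and $e_N=\sum_{N\mid i}n_i$, already shows $e_N<e_1$, since otherwise $p$ would be a polynomial in $t^N$. In every case $F(t^N)$ vanishes at all $N$ points $\zeta\omega^j$ to order at least $e_{qN}$. Summing over the $\varphi(q)$ primitive $q$-th roots and over all $q$ coprime to $N$ gives
\[
\tfrac{(d-1)N}{2}=\deg F(t^N)\;\geqslant\; N\sum_{(q,N)=1}\varphi(q)\,e_{qN}.
\]
The analogous bound holds for $G$. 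Together with $D=\sum_{q}\varphi(q)e_q$, where $q$ now ranges over all orders, these inequalities should pin down how much of $D$ can come from factors $(1-t^i)$ with $N\nmid i$.

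The final step is to combine these bounds with positivity and symmetry. $A$ is a domain-like $N$-Koszul algebra whose resolution has the expected symmetry, so $p$ is (anti)palindromic, and the coefficients $a_j$ are positive integers with $a_0=1$ and $a_1=\dim A_1$. I would expand $p=\prod(1-t^i)^{n_i}$ in low degrees. The vanishing of the coefficients of $t^2,\dots,t^{N-1}$ forces strong constraints on the $n_i$ with $i<N$ (for instance $n_1=a_1$ and $n_2=\binom{a_1}{2}$, and so on), which is exactly the source of Kabbaj's bound $d\geqslant\frac{2^N-4}{N}+1$. I would then feed the exponents $n_i$ for $i<N$, which are already forced to be large, into the degree inequality above. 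I expect this to yield $d\leqslant c\cdot N$ for a small constant $c$, contradicting the exponential lower bound for every prime $N\geqslant5$. For $N=3$, the same coefficient comparison should show $n_1=2$, $n_3$ small, and $D=4$, hence $d=3$.

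The main obstacle is the middle step: turning the twisting argument into a genuinely quantitative inequality. The orders $f_\zeta,g_\zeta$ and the cancellation case must be controlled uniformly, so that the contributions of the factors $(1-t^i)$ with $N\nmid i$ are bounded by a linear function of $d$. If a direct bound fails, I would first try to evaluate $p$ and its derivatives at $t=\omega$ and at $t=1$, obtaining linear relations among $\sum_k a_{2k}$, $\sum_k a_{2k+1}$ and the $n_i$. I would then play these relations off against the lower-degree coefficient identities.
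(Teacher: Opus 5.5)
There is a genuine gap, and it is more serious than the missing estimate you flag: your central inequality carries no information. Since $n_m>0$ you have $p(1)=0$, i.e.\ $F(1)=-G(1)$. So for a primitive $N$-th root of unity $\omega$ one gets $p(\omega)=F(1)+\omega G(1)=F(1)(1-\omega)$, with $F(1)=\sum_k a_{2k}\geqslant 1$. Thus $p(\omega)\neq 0$, i.e.\ $e_N=\sum_{N\mid i}n_i=0$. Every $e_{qN}$ therefore vanishes, and the right-hand side of $\frac{(d-1)N}{2}\geqslant N\sum_{(q,N)=1}\varphi(q)e_{qN}$ is identically zero.

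More fundamentally, the plan has no mechanism for an \emph{upper} bound on $d$. The low-degree identities $n_1=a_1$, $n_2=\binom{a_1}{2},\dots$ make the $n_i$ with $i<N$ large. That only pushes $D=\sum_i i\,n_i=\frac{(d-1)N}{2}+1$ up, which is exactly Kabbaj's lower bound, so no $d\leqslant cN$ can come out. The case $N=3$ shows this sharply. There Kabbaj's bound reads $d\geqslant 7/3$, so there is nothing to contradict, and you would have to exclude $d=7,11,\dots$ with $n_1$ arbitrary. Low-degree coefficient comparison does not force $D=4$.

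The missing idea is arithmetic rather than degree counting. Your fallback of evaluating at $\omega$ is the right starting point, but it must be used multiplicatively. From the product form, using $n_i=0$ for $N\mid i$ and $\prod_{i=1}^{N-1}(1-\omega^{ij})=N$ for $N\nmid j$, one gets $\prod_{i=1}^{N-1}p(\omega^i)=N^{e_1}$. The shape $F(t^N)+tG(t^N)$ instead gives $\prod_{i=1}^{N-1}p(\omega^i)=N\,F(1)^{N-1}$. Hence $F(1)=N^a$ and the order of vanishing at $t=1$ is $e_1=a(N-1)+1$ for some $a\geqslant 1$. The argument then splits:
\begin{enumerate}
\item If $a=1$, then $e_1=N$ together with $n_1\geqslant 2$ and $n_i\geqslant 1$ for $i<N$ forces $n_1=2$, $n_2=\dots=n_{N-1}=1$. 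The $t^3$-coefficient then rules out $N>3$, leaving $(1-t)^2(1-t^2)$, i.e.\ $d=3$.
\item If $a\geqslant 2$, use the antipalindromic symmetry. This follows directly from the product form, not from any ``domain-like'' property. It makes $e^{-\frac{D}{2}x}p(e^x)$ an odd function vanishing to order $a(N-1)+1$ at $0$. This yields the odd-moment identities $\sum_j a_{2j}\lambda_j^{2r+1}=0$, with $\lambda_j=jN-\frac{D}{2}$, for $0\leqslant r<\frac{a(N-1)}{2}$. An $N$-adic valuation argument on these identities then contradicts $F(1)=N^a$, with an extra congruence $a\equiv 1\pmod 3$ needed when $N=3$.
\end{enumerate}
Your twisting argument never reaches this $N$-adic information.
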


It should be pointed out that a complete answer for the non-existence of more $N$-Koszul Artin--Schelter regular algebras relies on the following open question. 

\begin{ques}\cite[Question 3.3]{Rog24}
    Let $A$ be an Artin--Schelter regular algebra. Is the Hilbert series of $A$ of the form $\hh_A(t)=\frac{1}{\prod\limits_{i=1}^m(1-t^{d_i})}$?
\end{ques}

\section{Preliminaries}
In this section we recall some definitions. Throughout $\kk$ is a fixed field, and modules refer to left modules unless otherwise stated.

A $\Z$-graded $\kk$-algebra $A$ is called \textbf{$\N$-graded} if $A_i=0$ for any $i<0$. An $\N$-graded algebra $A$ is called \textbf{connected graded} if $A_0=\kk$. 

For a graded $A$-module $M$ and $l\in \Z$, $M(l)$ is a graded $A$-module where $M(l)_i:=M_{l+i}$ for any $i\in \Z$. A graded $A$-module $M$ is called \textbf{locally finite} if $\dim_{\kk}(M_i)<\infty$ for any $i\in \Z$. The \textbf{Hilbert series} of a locally finite $A$-module $M$ is defined as $\hh_M(t)=\sum\limits_{i\in \Z}\dim_{\kk}(M_i) t^i$. 

\begin{defn}\cite{Ber01}\label{def N-Koszul}
    Let $A$ be a connected graded locally finite $\kk$-algebra. $A$ is called \textbf{N-Koszul} if $\Tor_i^A(\kk,\kk)$ is concentrated in degree $n(i)$ for any $i\geqslant 0$, where 
    \begin{align*}
        n(i)=
        \begin{cases}
            \frac{i}{2}N, &\text{ if $i$ is even};\\
            \frac{i-1}{2}N+1, &\text{ if $i$ is odd}.
        \end{cases}
    \end{align*}
\end{defn}

\begin{ex}\hspace*{\fill}
    \begin{enumerate}
        \item A connected graded locally finite algebra $A$ is called a \textbf{Koszul algebra} if $\Tor_i^A(\kk,\kk)$ is concentrated in degree $i$ for any $i\geqslant 0$. Koszul algebras are precisely the $2$-Koszul algebras in the above definition. 
        \item \textbf{Graded down-up algebras} are of the form 
        \[A=\kk\langle x,y\rangle/(x^2y-axyx-byx^2, y^2x-ayxy-bxy^2),\]
        where $a,b\in\kk$ with $b\neq 0$. It is well-known that they are $3$-Koszul algebras of global dimension $3$.
    \end{enumerate}
\end{ex}

The following lemma states that for a connected graded $\kk$-algebra $A$, the minimal free resolution of $_A\kk$ determines the Hilbert series of $A$.

\begin{lem}\cite[Lemma 2.3]{SZ97}\label{lem hilbert}
    Let $A$ be a connected graded algebra, and $M$ be a graded $A$-module with an augmented minimal free resolution
    \[ 0\to \bigoplus\limits_{i=1}^{z_d} A(-l_{d,i})\to \cdots \to \bigoplus\limits_{i=1}^{z_1} A(-l_{1,i})\to \bigoplus\limits_{i=1}^{z_0} A(-l_{0,i})\to M\to 0.\]
    Then $\hh_M(t)=c_M(t)\hh_A(t)$, where $c_M(t):=\sum\limits_{j=0}^d(-1)^j(\sum\limits_{i=1}^{z_j}t^{l_{j,i}})$. In particular, if $_A\kk$ has a finite free resolution then $\hh_A(t)=\frac{1}{c_{\kk}(t)}$.
\end{lem}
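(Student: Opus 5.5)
The statement is the Lemma \ref{lem hilbert} recalled from \cite{SZ97}. The plan is to use two facts: Hilbert series are additive along exact sequences of locally finite graded vector spaces, and they behave simply under degree shifts.

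First I will check that every term of the resolution is locally finite, so that all Hilbert series are defined as formal Laurent series in $t$.
\begin{itemize}
\item Since $A$ is connected graded and locally finite, $\hh_{A(-l)}(t)=\sum_i \dim_{\kk} A_{i-l}\, t^i = t^{l}\hh_A(t)$.
\item Each $z_j$ is finite, so each free module $F_j=\bigoplus_{i=1}^{z_j}A(-l_{j,i})$ is locally finite.
\item Its Hilbert series is $\hh_{F_j}(t)=\bigl(\sum_{i=1}^{z_j}t^{l_{j,i}}\bigr)\hh_A(t)$.
\item Only finitely many shifts occur, so these series have exponents bounded below and live in $\Z((t))$.
\end{itemize}

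Next I will fix a degree $n$ and restrict the exact sequence $0\to F_d\to\cdots\to F_0\to M\to 0$ to its degree-$n$ part. Graded maps preserve degree, so this gives a finite exact sequence of finite-dimensional $\kk$-vector spaces $0\to (F_d)_n\to\cdots\to(F_0)_n\to M_n\to 0$.

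The alternating sum of dimensions in such a sequence vanishes, which is rank–nullity applied step by step. Hence $\dim_{\kk} M_n=\sum_{j=0}^d(-1)^j\dim_{\kk}(F_j)_n$. In particular $M$ is locally finite.

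Multiplying by $t^n$ and summing over $n$ gives
\[\hh_M(t)=\sum_{j=0}^d(-1)^j\hh_{F_j}(t)=c_M(t)\hh_A(t).\]

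For the final assertion, take $M=\kk$. Then $\hh_{\kk}(t)=1$.

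In the minimal resolution, $F_0=A$ with $z_0=1$ and $l_{0,1}=0$. By minimality, every later generator lies in positive degree, so $l_{j,i}>0$ for $j\geqslant 1$ (the image of $F_j$ lies in $A_{\geqslant 1}F_{j-1}$).

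Therefore $c_{\kk}(t)$ is a polynomial in $t$ with constant term $1$, hence invertible in $\Z[[t]]$. The identity $1=c_{\kk}(t)\hh_A(t)$ then gives $\hh_A(t)=1/c_{\kk}(t)$.

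No step here is a serious obstacle. The only points needing care are the bookkeeping ones: local finiteness of all terms, which needs the finiteness of each $z_j$ and the finite length $d$, and the invertibility of $c_{\kk}(t)$, which needs its constant term to be $1$, coming from the positivity of the shifts in the minimal resolution.
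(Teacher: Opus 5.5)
Your proof is correct and is the standard argument. The paper gives no proof of its own and simply cites \cite[Lemma 2.3]{SZ97}; that result rests on exactly what you do: use $\hh_{A(-l)}(t)=t^{l}\hh_A(t)$, take the alternating sum of dimensions in each degree of the finite exact sequence, and invert $c_{\kk}(t)$. As a small simplification, the last step needs neither minimality nor positivity of the shifts: since $\hh_A(t)\in 1+t\Z[[t]]$ is already a unit, the identity $c_{\kk}(t)\hh_A(t)=1$ immediately gives $\hh_A(t)=1/c_{\kk}(t)$.
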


\section{Proof of Theorem \ref{thm no-more}}

In this section we prove the main theorem. The idea is that, as a result of Lemma \ref{lem hilbert}, for an $N$-Koszul algebra $A$ satisfying Hypothesis \eqref{hypo}, there are severe restrictions on $c_{\kk}(t)$, which is a polynomial with integral coefficients. We prove that there are no such polynomials other than $f(t)=1-2t+2t^3-t^4$, which corresponds to $3$-Koszul algebras of global dimension $3$.

\begin{defn}
    A polynomial $f(t)\in \Z[t]$ is called a \textbf{good polynomial} if:
    \begin{enumerate}
        \item $f(t)=1-\beta_1 t+\beta_2 t^p-\beta_3 t^{p+1}+\beta_4 t^{2p}+\cdots+\beta_{2k} t^{kp}-t^{kp+1}$, where $p$ is an odd prime number, and $\beta_i=\beta_{2k+1-i}>0$, $\forall\ 1\leqslant i\leqslant 2k$;
        \item $f(t)=\prod\limits_{i=1}^{m}(1-t^i)^{n_i}$, where $m\geqslant p-1$, $n_i\in\N$ such that $n_1,\dotsc, n_{p-1}, n_m>0$; $n_{ip}=0$ for $1\leqslant i\leqslant \lfloor m/p\rfloor$.
    \end{enumerate}
\end{defn}

\begin{prop}
    Let $A$ be an $N$-Koszul algebra of global dimension $d$ which satisfies hypothesis \eqref{hypo}. Then $c_{\kk}(t)$ is a good polynomial such that $d=2k+1$. 
\end{prop}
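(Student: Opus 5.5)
The plan is to read $c_{\kk}(t)$ off the minimal resolution, then get the shape in condition (1) from $N$-Koszulity, Theorem \ref{thm Kabbaj} and the symmetry of the product. For condition (2) I will use a logarithmic expansion to get the low exponents and a root of unity to rule out the multiples of $p$.

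\textbf{Shape of $c_{\kk}(t)$.} Since $A$ is $N$-Koszul, the $i$-th term of the minimal free resolution of $_A\kk$ is $A(-n(i))^{\beta_i}$ with $\beta_i=\dim_{\kk}\Tor_i^A(\kk,\kk)$ and $\beta_0=1$. Global dimension $d$ means $\beta_d\neq 0$ and $\beta_i=0$ for $i>d$. Also $\beta_i\neq 0$ for every $i\leqslant d$: if $\Tor_i$ vanished, minimality would force all higher $\Tor$ to vanish. By Lemma \ref{lem hilbert} and hypothesis \eqref{hypo},
\[c_{\kk}(t)=\sum_{i=0}^d(-1)^i\beta_i t^{n(i)}=\prod_{i=1}^m(1-t^i)^{n_i}.\]
By Theorem \ref{thm Kabbaj}, $N=p$ is a prime, and since $N>2$ it is odd. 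Again by Theorem \ref{thm Kabbaj}, $d\equiv 3 \pmod 4$, so $d=2k+1$ is odd. The exponents $n(i)$ are strictly increasing because $p>2$, so no terms cancel and the displayed sum already has the form in condition (1), with top term $-\beta_{2k+1}t^{kp+1}$.

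\textbf{Symmetry and the top coefficient.} The product has leading coefficient $\pm1$, so $\beta_{2k+1}=1$. Put $D=\sum i n_i=kp+1$. Each factor satisfies $t^i(1-t^{-i})=-(1-t^i)$, hence
\[t^Df(1/t)=(-1)^{\sum n_i}f(t).\]
Comparing constant terms gives $(-1)^{\sum n_i}=-1$, matching the leading coefficient $-1$. One checks directly that $D-n(i)=n(d-i)$, for example $kp+1-jp=n(2(k-j)+1)$. Comparing coefficients then gives $(-1)^{d-i}\beta_{d-i}=-(-1)^i\beta_i$, which is $\beta_{d-i}=\beta_i$ because $d$ is odd. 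Positivity of each $\beta_i$ was shown above.

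\textbf{Condition (2), low exponents.} Since $f=1-\beta_1t+O(t^p)$, we have $\log f=\log(1-\beta_1 t)+O(t^p)$. On the other hand $-\log f=\sum_r\bigl(\sum_{e\mid r}e\,n_e\bigr)t^r/r$. So $\sum_{e\mid r}e\,n_e=\beta_1^r$ for $r<p$, and Möbius inversion gives
\[r\,n_r=\sum_{e\mid r}\mu(r/e)\beta_1^e,\]
which is $r$ times the number of aperiodic necklaces of length $r$ on $\beta_1$ letters. If $\beta_1=1$, then $A$ is generated by a single element, so $A$ is $\kk[x]$ (excluded) or $\kk[x]/(x^p)$, which has infinite global dimension. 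Hence $\beta_1\geqslant 2$, all these necklace counts are positive, so $n_1,\dots,n_{p-1}>0$ and in particular $m\geqslant p-1$.

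\textbf{Condition (2), multiples of $p$.} This is the key step. Let $\zeta$ be a primitive $p$-th root of unity. Since $\zeta^{n(i)}$ equals $1$ for even $i$ and $\zeta$ for odd $i$,
\[f(\zeta)=\sum_{i\text{ even}}\beta_i-\Bigl(\sum_{i\text{ odd}}\beta_i\Bigr)\zeta.\]
If some $n_{jp}>0$, then $\zeta$ is a root of $1-t^{jp}$, so $f(\zeta)=0$. Because $p>2$, the numbers $1$ and $\zeta$ are linearly independent over $\mathbb{Q}$, so both sums would have to vanish. This contradicts $\beta_i>0$. Hence $n_{ip}=0$ for $1\leqslant i\leqslant\lfloor m/p\rfloor$, so $c_{\kk}(t)$ is a good polynomial with $d=2k+1$.
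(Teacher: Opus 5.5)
Your proposal is correct. The first half matches the paper: you read $c_{\kk}(t)$ off the minimal resolution, invoke Theorem \ref{thm Kabbaj} to get $N=p$ an odd prime and $d$ odd, and derive the palindromic symmetry from $t^{D}f(1/t)=(-1)^{\sum n_i}f(t)$. You are slightly more explicit than the paper here. You justify $\beta_i>0$ for all $i\le d$ via minimality, and you obtain the sign $(-1)^{\sum n_i}=-1$ and $\beta_{2k+1}=1$ from the leading coefficient, where the paper just asserts them.

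Where you genuinely differ is condition (2). The paper cites \cite[Lemma 3.6]{Kab25} for $n_1,\dots,n_{p-1}>0$ and $m\ge p-1$, and \cite[Proposition 3.3]{Kab25} for $n_{ip}=0$; you prove both directly.

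\begin{itemize}
\item Comparing $\log f$ with $\log(1-\beta_1 t)$ modulo $t^p$ and applying M\"obius inversion, you identify $n_r$ for $r<p$ as the necklace numbers $\frac1r\sum_{e\mid r}\mu(r/e)\beta_1^e$. These are positive once $\beta_1\ge 2$. They also agree with the values $n_2=\binom{n_1}{2}$ and $n_3=n_1n_2-\binom{n_1}{3}$ the paper uses later.
\item Evaluating at a primitive $p$-th root of unity gives $f(\zeta)=\sum_{i\ \mathrm{even}}\beta_i-\bigl(\sum_{i\ \mathrm{odd}}\beta_i\bigr)\zeta\neq 0$, which rules out any factor $1-t^{jp}$.
\end{itemize}

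This makes the proposition self-contained at the cost of a few lines, whereas the paper's version is shorter but depends on Kabbaj's lemmas. Your root-of-unity evaluation also anticipates the paper's next proposition. There, the same computation combined with the symmetry gives $f(\omega^i)=S(1-\omega^i)$.
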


\begin{proof}
    By assumption $_A\kk$ has an augmented minimal free resolution 
    \[ 0\to \bigoplus\limits_{i=1}^{\beta_d} A(-l_{d,i})\to \cdots \to \bigoplus\limits_{i=1}^{\beta_1} A(-l_{1,i})\to A \to \kk \to 0.\]
    Theorem \ref{thm Kabbaj} implies that $N=p$ is an odd prime number, and $d=2k+1$ for some odd number $k$. It follows from the $N$-Koszul condition that $l_{j,i}=n(j)$, $\forall 1\leqslant i\leqslant \beta_j$, $\forall 1\leqslant j\leqslant d$. Therefore 
    \begin{align*}
        c_{\kk}(t)        =&1+\sum\limits_{j=1}^d(-1)^j(\sum\limits_{i=1}^{\beta_j}t^{l_{j,i}})    =1+\sum\limits_{j=1}^d(-1)^j\beta_jt^{n(j)}\\
        =& 1-\beta_1t+\beta_2t^p-\beta_3t^{p+1}+\cdots+\beta_{2k}t^{kp}-\beta_{2k+1}t^{kp+1}.
    \end{align*}
    On the other hand, by hypothesis \eqref{hypo} $c_{\kk}(t)=\prod\limits_{i=1}^{m}(1-t^i)^{n_i}$, which implies that 
    \[c_{\kk}(t)=(-1)^{\sum\limits_{i=1}^m n_i}t^{\sum\limits_{i=1}^m in_i}c_{\kk}(\frac{1}{t})=-t^{kp+1}c_{\kk}(\frac{1}{t}).\]
    By comparing coefficients of both sides we conclude immediately that $\beta_{2k+1}=1$ and $\beta_i=\beta_{2k+1-i}$, $\forall 1\leqslant i\leqslant 2k$. 
    By \cite[Proposition 5.2]{PP05} $\beta_1$ is the number of minimal generators of $A$ (as a $\kk$-algebra). If $\beta_1=1$, then, since $A$ is $N$-homogeneous, $A$ is either $\kk[x]$ or $\kk[x]/(x^N)$. The former is excluded by our standing convention, and the latter has infinite global dimension. Hence $n_1=\beta_1\geqslant 2$, and it follows from \cite[Lemma 3.6]{Kab25} that $n_i>0$, $\forall i=1,\dotsc, p-1$, and $m\geqslant p-1$. The rest follows from \cite[Proposition 3.3]{Kab25}.
\end{proof}

Let $f(t)$ be a good polynomial. Next we prove that
\[f(t)=(1-t)^2(1-t^2)=1-2t+2t^3-t^4.\]

Let $g_0=1$, $g_i=\beta_{2i}$, $\forall 1\leqslant i\leqslant k$. Let $G(x)=\sum\limits_{i=0}^k g_i x^i$, $H(x)=\sum\limits_{i=0}^k g_{k-i}x^{i}$. Then 

\begin{align*}
    f(x)=& G(x^p)-xH(x^p);\\ 
    H(x)=& x^k G(\frac{1}{x}).\\
\end{align*}

Let $S=G(1)=H(1)$, $N_0=\sum\limits_{i=1}^m n_i$.

\begin{prop}
    There exists $a\in \N^*$ such that $S=p^a$, $N_0=a(p-1)+1$.
\end{prop}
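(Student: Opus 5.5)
The plan is to evaluate the identity $f(x)=G(x^p)-xH(x^p)=\prod_{i=1}^m(1-x^i)^{n_i}$ at a primitive $p$-th root of unity $\zeta$, and to compare the two sides in the ring of integers $\Z[\zeta]$ of $\mathbb{Q}(\zeta)$ by taking norms.

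\textbf{Step 1: the left-hand side.} Since $\zeta^p=1$, we get
\[f(\zeta)=G(1)-\zeta H(1)=S(1-\zeta).\]

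\textbf{Step 2: the right-hand side.} The definition of a good polynomial gives $n_{ip}=0$ for every multiple of $p$ up to $m$. So every factor with $n_i>0$ has $p\nmid i$. For such $i$ we write
\[1-\zeta^i=(1-\zeta)(1+\zeta+\cdots+\zeta^{i-1}).\]
The second factor is a unit in $\Z[\zeta]$: its inverse is $(1-\zeta)/(1-\zeta^i)=(1-\zeta^{ij})/(1-\zeta^i)$, where $ij\equiv 1 \pmod p$, and this is a polynomial in $\zeta^i$. Hence
\[\prod_{i=1}^m(1-\zeta^i)^{n_i}=u\,(1-\zeta)^{N_0}\]
for some unit $u\in\Z[\zeta]^\times$.

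\textbf{Step 3: cancel.} Since $\Z[\zeta]$ is a domain and $1-\zeta\neq 0$, Steps 1 and 2 give $S=u(1-\zeta)^{N_0-1}$. We may cancel one factor because $N_0\geqslant n_1\geqslant 1$.

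\textbf{Step 4: take norms.} Apply the norm $N_{\mathbb{Q}(\zeta)/\mathbb{Q}}$. We use three standard facts: $N(1-\zeta)=\Phi_p(1)=p$, $N(u)=\pm1$, and $N(S)=S^{p-1}$ since $S\in\Z$. This gives
\[S^{p-1}=\pm p^{N_0-1}.\]
Now $S=G(1)=1+\sum_{i=1}^k\beta_{2i}$ is a positive integer, so the sign is $+$. By unique factorization in $\Z$, $S$ must be a power of $p$, say $S=p^a$ with $a\in\N$. Comparing exponents then gives $a(p-1)=N_0-1$, that is, $N_0=a(p-1)+1$.

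\textbf{Step 5: $a\geqslant1$.} Here $k\geqslant1$ because $d=2k+1$ and Theorem~\ref{thm Kabbaj} gives $d\equiv 3 \pmod 4$. Together with $\beta_2>0$ this yields $S\geqslant 1+\beta_2>1$, so $a\geqslant 1$.

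The only point requiring care is Step 2: the unit claim for $1-\zeta^i$ with $p\nmid i$, and the use of the hypothesis $n_{ip}=0$. Without that hypothesis, any factor with $p\mid i$ would make the right-hand side vanish at $\zeta$. Everything else is routine cyclotomic arithmetic.
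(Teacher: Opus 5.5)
Your proof is correct and is essentially the paper's argument. The norm $N_{\mathbb{Q}(\zeta)/\mathbb{Q}}$ is exactly the paper's product $\prod_{i=1}^{p-1} f(\omega^i)$, which gives $pS^{p-1}=p^{N_0}$; your unit factorization in Step 2 repackages the paper's observation that $\prod_{i=1}^{p-1}(1-\omega^{ik})=p$ when $p\nmid k$. Your Step 5 also makes explicit the bound $a\geqslant 1$ that the paper leaves implicit; it follows just as well from $N_0\geqslant n_1+\cdots+n_{p-1}\geqslant p-1\geqslant 2$.
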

 
\begin{proof}
    First note that for a primitive $p$-th root of unity $\omega$,
    \[\prod_{i=1}^{p-1}(1-\omega^i)=p.\]
    Since $n_k=0$ whenever $p\mid k$, and multiplication by $k$ permutes the nonzero residue classes modulo $p$ whenever $p\nmid k$, we have  
    \begin{align*}
        \prod\limits_{i=1}^{p-1} f(\omega^i)
        =\prod\limits_{k=1}^m \prod\limits_{i=1}^{p-1}(1-\omega^{ik})^{n_k}
        =p^{N_0}.
    \end{align*}
    On the other hand,
    \begin{align*}
        f(\omega^i)=G(\omega^{ip})-\omega ^i H(\omega^{ip})=S(1-\omega^i),\forall\ 1\leqslant i\leqslant p-1.
    \end{align*}
    Therefore
    \begin{align*}
        pS^{p-1}=\prod\limits_{i=1}^{p-1}f(\omega^i)=p^{N_0}.
    \end{align*}
    Since $p$ is prime, the rest follows immediately.
\end{proof}

\begin{prop}
    If $a=1$ then $p=3$, $m=2$, $n_1=2$, $n_2=1$.
\end{prop}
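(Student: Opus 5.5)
The plan is to combine the two constraints available when $a=1$: the count $N_0=p$ of factors, and the vanishing of the coefficients of $t^2,\dots,t^{p-1}$ in $f$. The first is Proposition (the one giving $S=p^a$ and $N_0=a(p-1)+1$). The second holds because, in a good polynomial, the first nonzero term after $-\beta_1 t$ is $\beta_2 t^p$.

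\emph{Step 1: a counting constraint.} With $a=1$ we have $N_0=\sum_{i=1}^m n_i=p$. Since $n_1,\dots,n_{p-1}$ are all positive, they already contribute at least $p-1$ to this sum. Hence exactly one unit is left over, and exactly one of the following holds:
\begin{enumerate}
\item[(i)] exactly one $n_j$ with $1\leqslant j\leqslant p-1$ equals $2$, all other $n_i$ with $i\leqslant p-1$ equal $1$, and $m=p-1$;
\item[(ii)] $n_1=\cdots=n_{p-1}=1$, and there is a single extra factor $(1-t^m)$ with $m>p-1$ and $p\nmid m$.
\end{enumerate}

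\emph{Step 2: pin down $n_1$ and $n_2$ from the $t^2$ coefficient.} Since $p\geqslant 3$, the coefficient of $t^2$ in $f$ must be $0$. Expanding $\prod(1-t^i)^{n_i}$ modulo $t^3$, that coefficient is $\binom{n_1}{2}-n_2$, so $n_2=\binom{n_1}{2}$. Because $2\leqslant p-1$ we have $n_2>0$, which forces $n_1\geqslant 2$. This rules out case (ii) and, within case (i), forces $n_1=2$. Then $n_2=1$, and $n_i=1$ for $2\leqslant i\leqslant p-1$. Therefore
\[
f(t)=(1-t)^2\prod_{i=2}^{p-1}(1-t^i), \qquad m=p-1 .
\]

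\emph{Step 3: exclude $p>3$ via the $t^3$ coefficient.} Suppose $p\geqslant 5$. Then $3\leqslant p-1$, so the coefficient of $t^3$ in $f$ must vanish, and the factor $(1-t^3)$ is present. Compute modulo $t^4$: first $(1-t)^2(1-t^2)\equiv 1-2t+2t^3$. Multiplying by $(1-t^3)$ gives $1-2t+t^3$. The factors $(1-t^i)$ with $i\geqslant 4$ do not change this modulo $t^4$. So the coefficient of $t^3$ is $1\neq 0$, a contradiction. Hence $p=3$, $m=2$, $n_1=2$, $n_2=1$.

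As a consistency check, this gives $f=1-2t+2t^3-t^4$, which is indeed good with $k=1$. I expect the only delicate point to be Step 1's bookkeeping: one must use that the multiples of $p$ are excluded and that $n_1,\dots,n_{p-1}>0$, so that the single leftover unit is accounted for. The coefficient computations themselves are routine.
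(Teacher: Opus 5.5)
Your proof is correct and follows essentially the same route as the paper. The $t^2$ coefficient gives $n_2=\binom{n_1}{2}$, hence $n_1\geqslant 2$. The count $N_0=p$ then forces $n_1=2$, $n_i=1$ for $2\leqslant i\leqslant p-1$, and $m=p-1$. For $p>3$, the vanishing of the $t^3$ coefficient gives the contradiction; the paper phrases this as $n_3=n_1n_2-\binom{n_1}{3}=2\neq 1$, and the only ordering difference is that you split into cases before using $n_1\geqslant 2$, whereas the paper uses it first to get $N_0\geqslant p$.
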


\begin{proof}
    Since $f(t)$ is a good polynomial, $0\neq n_2=\binom{n_1}{2}$. So $n_1\geqslant 2$, and
    \[N_0=\sum\limits_{i=1}^m n_i\geqslant \sum\limits_{i=1}^{p-1} n_i\geqslant p.\]
    
    If $a=1$ then $N_0=p$, which implies that 
    \[n_1=2,\ n_i=1,\ \forall\ 2\leqslant i\leqslant p-1;\ n_i=0,\ \forall\ i\geqslant p.\]

    By the requirement of $m$ we conclude that $m=p-1$. If $p=3$ then $(n_1,n_2)=(2,1)$ is as required. If $p>3$ then $f(t)$ being a good polynomial implies that $n_3=n_1n_2-\binom{n_1}{3}=2$, contradicting $n_3=1$. 
\end{proof}

From now on we assume that $a\geqslant 2$, and prove that this is impossible.

First, a direct calculation shows that 
\[G'(1)=\frac{kp+1}{2}p^{a-1},\ H'(1)=\frac{kp-1}{2}p^{a-1}.\]
Consequently $k$ is odd. Let $k=2k'+1$.

\begin{lem}\label{lem new-function}
    Let 
    \[\Phi(x)=e^{-\frac{kp+1}{2}x}f(e^x).\]
    Then $\Phi^{(r)}(0)=0$, $\forall\ r=0,1,\dotsc, a(p-1)$.
\end{lem}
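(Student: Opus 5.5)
The plan is to read the statement as a claim about the order of vanishing of $f(e^x)$ at $x=0$, and to get that order directly from the product form of $f$ in condition (2) of a good polynomial. The prefactor $e^{-\frac{kp+1}{2}x}$ should play no role in this lemma, since it is analytic and nowhere zero.

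First I would write
\[
f(e^x)=\prod_{i=1}^m\bigl(1-e^{ix}\bigr)^{n_i}.
\]
Each factor $1-e^{ix}$ with $i\geqslant 1$ is an entire function of $x$. It has a simple zero at $x=0$, because $1-e^{ix}=-ix\,u_i(x)$ with $u_i(x)=\sum_{j\geqslant 0}\frac{(ix)^j}{(j+1)!}$, which is entire and satisfies $u_i(0)=1$. Collecting the factors gives
\[
f(e^x)=x^{N_0}\,U(x),\qquad U(x)=\prod_{i=1}^m(-i)^{n_i}u_i(x)^{n_i},
\]
where $U$ is entire. Here $N_0=\sum_i n_i$ as before.

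Multiplying by the entire function $e^{-\frac{kp+1}{2}x}$ then gives $\Phi(x)=x^{N_0}V(x)$ with $V$ entire. Hence the Taylor expansion of $\Phi$ at $0$ starts in degree at least $N_0$, and so $\Phi^{(r)}(0)=0$ for all $0\leqslant r\leqslant N_0-1$. By the preceding proposition, $N_0=a(p-1)+1$, so $N_0-1=a(p-1)$, which is exactly the range claimed.

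I do not expect a real obstacle in this argument. The only point needing care is to use the factorization in condition (2), rather than the sparse form in condition (1), to identify the vanishing order, and then to invoke $N_0=a(p-1)+1$. The exponential normalization is presumably there for later use: it makes $\Phi$ odd, because the functional equation $f(t)=-t^{kp+1}f(1/t)$ gives $\Phi(-x)=-\Phi(x)$. That oddness is not needed for this lemma.
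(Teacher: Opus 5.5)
Your proposal is correct and is essentially the paper's argument. The paper likewise observes that $\text{ord}_{x=0}\Phi(x)=\text{ord}_{x=1}f(x)=N_0=a(p-1)+1$ via the product form, with the exponential prefactor playing no role; your factorization $1-e^{ix}=-ix\,u_i(x)$ makes that step explicit. The paper also records the expansion $\Phi(x)=\sum_{i=0}^k g_i(e^{\lambda_i x}-e^{-\lambda_i x})$ with $\lambda_i=ip-\frac{kp+1}{2}$. This expansion is the oddness you anticipate, and it is used only in the next lemma.
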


\begin{proof}
    By definition 
    \[\Phi(x)=\sum_{i=0}^kg_i(e^{\lambda_i x}-e^{-\lambda_ix}),\]
    where $\lambda_i=ip-\frac{kp+1}{2}=ip-k'p-\frac{p+1}{2}$, $\forall\ i=0,1,\dotsc,k$.
    It follows directly that 
    \[\text{ord}_{x=0}\Phi(x)= \text{ord}_{x=1}f(x)=N_0=a(p-1)+1.\]
\end{proof}

\begin{lem}
    For $s\in \N$, let 
    \[F(s)=\sum\limits_{b=0}^{k'}g_{k'-b}(\frac{p-1}{2}-(b+1)p)^s+\sum\limits_{b=0}^{k'}g_{k'+b+1}(\frac{p-1}{2}+bp)^s,\] 
    then $F(2r+1)=0,\ \forall\ r=0,1,\dotsc, \frac{a(p-1)}{2}-1$.
\end{lem}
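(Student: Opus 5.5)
The plan is to read $F(s)$ as the $s$-th moment $\sum_{i=0}^k g_i\lambda_i^s$ of the exponents appearing in the expansion of $\Phi$ from Lemma \ref{lem new-function}. The vanishing of $F$ at odd arguments then becomes the vanishing of the odd-order derivatives of $\Phi$ at $0$.

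\emph{Step 1 (expansion of $\Phi$).} Using $f(x)=G(x^p)-xH(x^p)$ and $H(y)=y^kG(1/y)$, we get
\[xH(x^p)=\sum_{i=0}^k g_i x^{1+(k-i)p}.\]
Hence $e^{-\frac{kp+1}{2}x}f(e^x)=\sum_{i=0}^k g_i\bigl(e^{\lambda_i x}-e^{\mu_i x}\bigr)$, where
\[\lambda_i=ip-\tfrac{kp+1}{2},\qquad \mu_i=1+(k-i)p-\tfrac{kp+1}{2}.\]
A one-line computation gives $\mu_i=-\lambda_i$. This recovers the formula $\Phi(x)=\sum_i g_i(e^{\lambda_i x}-e^{-\lambda_i x})$ used in the proof of Lemma \ref{lem new-function}.

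\emph{Step 2 (derivatives at $0$).} Differentiating termwise,
\[\Phi^{(s)}(0)=\sum_{i=0}^k g_i\bigl(\lambda_i^s-(-\lambda_i)^s\bigr).\]
This is $0$ for even $s$ and equals $2\sum_i g_i\lambda_i^{s}$ for odd $s$.

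\emph{Step 3 (matching with $F$).} With $k=2k'+1$, split the index set $\{0,\dots,k\}$ into $i=k'-b$ and $i=k'+b+1$ for $0\leqslant b\leqslant k'$. These two families together cover $\{0,\dots,k'\}\cup\{k'+1,\dots,2k'+1\}$ exactly once. For $i=k'-b$,
\[\lambda_i=-bp-\tfrac{p+1}{2}=\tfrac{p-1}{2}-(b+1)p.\]
For $i=k'+b+1$,
\[\lambda_i=(b+1)p-\tfrac{p+1}{2}=\tfrac{p-1}{2}+bp.\]
Therefore $F(s)=\sum_{i=0}^k g_i\lambda_i^s$, and consequently $F(2r+1)=\tfrac12\Phi^{(2r+1)}(0)$.

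\emph{Step 4 (range of $r$).} By Lemma \ref{lem new-function}, $\Phi^{(2r+1)}(0)=0$ whenever $2r+1\leqslant a(p-1)$. Since $p$ is odd, $a(p-1)$ is even, so this condition is equivalent to $r\leqslant \frac{a(p-1)}{2}-1$. This gives the claim.

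The only real point to check is the bookkeeping in Steps 1 and 3: that $\mu_i=-\lambda_i$, and that the reindexing via $k=2k'+1$ (which uses the oddness of $k$ established above) reproduces exactly the two sums defining $F$. Everything else is formal.
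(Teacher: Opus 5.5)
Your proposal is correct and takes essentially the same approach as the paper, which also identifies $F(2r+1)$ with $\sum_{i=0}^k g_i\lambda_i^{2r+1}=\tfrac12\Phi^{(2r+1)}(0)$ via the reindexing $i=k'-b$, $i=k'+b+1$, and then invokes Lemma~\ref{lem new-function}. You additionally spell out bookkeeping the paper leaves implicit: the check $\mu_i=-\lambda_i$, and the parity argument showing $2r+1\leqslant a(p-1)$ is equivalent to $r\leqslant \frac{a(p-1)}{2}-1$.
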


\begin{proof}
    It follows from Lemma \ref{lem new-function} that $\forall\ r=0,1,\dotsc,\frac{a(p-1)}{2}-1$,
    \begin{align*}
        0=& \sum\limits_{i=0}^kg_i\lambda_i^{2r+1}\\
         =& \sum\limits_{i=0}^kg_i(ip-k'p-\frac{p+1}{2})^{2r+1}\\
         =& \sum\limits_{b=0}^{k'}g_{k'-b}(-(b+1)p+\frac{p-1}{2})^{2r+1}+\sum\limits_{b=0}^{k'}g_{k'+b+1}(bp+\frac{p-1}{2})^{2r+1}\\
         = & F(2r+1).
    \end{align*}
\end{proof}

Let 
\[D_n=\sum\limits_{b=0}^{k'}g_{k'-b}(-b-1)^n+\sum\limits_{b=0}^{k'}g_{k'+b+1}b^n,\]
then $D_0=\sum\limits_{i=0}^k g_i=S=p^a$. Moreover
\[F(s)=\sum\limits_{i=0}^s \binom{s}{i}(\frac{p-1}{2})^{s-i}p^iD_i.\]

Let 
\[b_r=\sum\limits_{j=0}^r(-1)^{r-j}\binom{r}{j}(\frac{p-1}{2})^{-(2j+1)}F(2j+1),\]
then $b_r=0$, $\forall\ r=0,1,\dotsc, \frac{a(p-1)}{2}-1$.
 
For a given polynomial $\varphi(x)$, let $\Delta \varphi(x):=\varphi(x)-\varphi(x-1)$ be the first-order difference. For $n\geqslant 2$, higher order differences are given by $\Delta^n \varphi(x):=\Delta(\Delta^{n-1}\varphi(x))$. 

\begin{prop}
    For any $0\leqslant r\leqslant \frac{a(p-1)}{2}-1$, there exist $d_{n,r}\in\Z$ for $r\leqslant n\leqslant 2r+1$, such that 
    \[b_r=\sum\limits_{n=r}^{2r+1}d_{n,r} (\frac{2p}{p-1})^n D_n,\]
    where $d_{r,r}=2^r$.
\end{prop}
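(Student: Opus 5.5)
The plan is to substitute the binomial expansion of $F$ into the definition of $b_r$, swap the order of summation, and identify the inner coefficient as an $r$-th order difference of a polynomial; the claimed properties of $d_{n,r}$ then come from the degree and leading coefficient of that polynomial.

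Write $c=\frac{p-1}{2}$ and $q=\frac{2p}{p-1}=\frac{p}{c}$. The expansion stated just before the proposition gives
\[c^{-(2j+1)}F(2j+1)=\sum_{i=0}^{2j+1}\binom{2j+1}{i}c^{-i}p^iD_i=\sum_{i=0}^{2j+1}\binom{2j+1}{i}q^iD_i .\]
Substituting this into the definition of $b_r$ and exchanging the two finite sums, I obtain
\[b_r=\sum_{n\geqslant 0}d_{n,r}\,q^nD_n,\qquad d_{n,r}:=\sum_{j=0}^{r}(-1)^{r-j}\binom{r}{j}\binom{2j+1}{n}.\]
Each $d_{n,r}$ is visibly an integer, so it remains to show three things: $d_{n,r}=0$ for $n>2r+1$, $d_{n,r}=0$ for $n<r$, and $d_{r,r}=2^r$.

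The vanishing for $n>2r+1$ is immediate, since $\binom{2j+1}{n}=0$ whenever $j\leqslant r$. For the other two claims, set $\varphi_n(x)=\binom{2x+1}{n}$. This is a polynomial in $x$ of degree $n$ with leading coefficient $\frac{2^n}{n!}$. The standard identity $\Delta^r\varphi(x)=\sum_{j=0}^r(-1)^{j}\binom{r}{j}\varphi(x-j)$, evaluated at $x=r$ and reindexed by $j\mapsto r-j$, shows that $d_{n,r}=\Delta^r\varphi_n(r)$. Each application of $\Delta$ lowers the degree by one and multiplies the leading coefficient by the degree. Hence $\Delta^r\varphi_n=0$ when $n<r$, and when $n=r$ the difference $\Delta^r\varphi_r$ is the constant $r!\cdot\frac{2^r}{r!}=2^r$. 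This gives $d_{r,r}=2^r$.

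The main obstacle is therefore only the bookkeeping of the sum exchange and the sign convention in the difference operator. Once the inner sum is recognised as an $r$-th difference of the degree-$n$ polynomial $\varphi_n$, everything follows from the elementary facts about differences recalled before the proposition.
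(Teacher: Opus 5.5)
Your proposal is correct and matches the paper's proof: expand $F(2j+1)$, swap the sums, recognise $d_{n,r}$ as an $r$-th difference of $\binom{2x+1}{n}$, and read off $d_{n,r}=0$ for $n<r$ and $d_{r,r}=2^r$ from the degree and leading coefficient. Your evaluation point $x=r$ is actually the right one for the backward difference $\Delta\varphi(x)=\varphi(x)-\varphi(x-1)$; the paper writes $\Delta^rP_n(0)$, which is off by a shift but harmless, since $\Delta^r$ of a polynomial of degree at most $r$ is constant.
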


\begin{proof}
    By definition
    \begin{align*}
        b_r=& \sum\limits_{j=0}^r(-1)^{r-j}\binom{r}{j}\sum\limits_{n=0}^{2j+1}\binom{2j+1}{n}(\frac{2p}{p-1})^n D_n\\
        =& \sum\limits_{n=0}^{2r+1}(\sum\limits_{j=0}^r (-1)^{r-j}\binom{r}{j}\binom{2j+1}{n})(\frac{2p}{p-1})^nD_n.
    \end{align*}
    Let $d_{n,r}=\sum\limits_{j=0}^r (-1)^{r-j}\binom{r}{j}\binom{2j+1}{n}$. Note that $d_{n,r}=\Delta^r P_n(0)$, where $P_n(x)=\binom{2x+1}{n}$. Consequently $d_{n,r}=0$ whenever $n<r$. Moreover $P_r(x)=\binom{2x+1}{r}$ is a polynomial in $\mathbb{Q}[x]$ of degree $r$ with leading coefficient $\frac{2^r}{r!}$. It follows that
    \[d_{r,r}=\Delta^r P_r(0)=r! \frac{2^r}{r!}=2^r.\]
\end{proof}

Next we prove that $p\mid D_a$. The following divisibility arguments work in $\Z_{(p)}$ instead of $\Z$, where $\Z_{(p)}$ denotes the localization of $\Z$ at the prime ideal $(p)$. 

\begin{lem}
    If $p=3$ then $a\equiv 1(\text{mod }3)$, and consequently $a\geqslant 4$.
\end{lem}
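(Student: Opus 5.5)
We will work with the cube root of unity $\omega=e^{2\pi i/3}$ and compare two computations of $f(\omega)$ and $f'(\omega)$: one from the product form $f(t)=\prod_{i}(1-t^i)^{n_i}$, the other from $f(x)=G(x^3)-xH(x^3)$. Since $n_i=0$ for $3\mid i$, every factor in the product is nonzero at $\omega$. Split the exponents by residue class:
\[A_1=\sum_{i\equiv 1 (3)}n_i,\quad A_2=\sum_{i\equiv 2 (3)}n_i,\quad B_1=\sum_{i\equiv 1 (3)}in_i,\quad B_2=\sum_{i\equiv 2 (3)}in_i .\]
Thus $A_1+A_2=N_0=2a+1$ and $B_1+B_2=\deg f=3k+1$.

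\emph{Step 1 (values).} Use $1-\omega^2=(1-\omega)(-\omega^2)$ and $(1-\omega)^2=-3\omega$. These give
\[f(\omega)=(1-\omega)^{2a+1}(-\omega^2)^{A_2}=3^a(1-\omega)(-1)^{a+A_2}\omega^{a+2A_2}.\]
The computation in the previous proposition gives $f(\omega)=S(1-\omega)=3^a(1-\omega)$. Comparing, we get $a+2A_2\equiv 0 \pmod 3$, that is, $A_2\equiv a \pmod 3$. This alone does not pin down $a$, since the degree condition $B_1+B_2\equiv A_1+2A_2$ only reproduces a tautology. So we need first-order information.

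\emph{Step 2 (logarithmic derivative).} From the product,
\[\frac{f'(\omega)}{f(\omega)}=-\sum_{i\equiv1}\frac{in_i}{1-\omega}-\sum_{i\equiv2}\frac{in_i\,\omega}{1-\omega^2}=\frac{-B_1+B_2\omega^2}{1-\omega}.\]
Here we used $\omega^{i-1}=1$ or $\omega$ according to the class of $i$, and $\omega/(1+\omega)=-\omega^2$.

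From $f'(x)=3x^2G'(x^3)-H(x^3)-3x^3H'(x^3)$, the values $G'(1)=\frac{3k+1}{2}3^{a-1}$, $H'(1)=\frac{3k-1}{2}3^{a-1}$ and $S=3^a$, we get
\[f'(\omega)=3^a\cdot\frac{3k+1}{2}(\omega^2-1).\]
Dividing by $f(\omega)=3^a(1-\omega)$ and comparing gives
\[-B_1+B_2\omega^2=\frac{3k+1}{2}(\omega^2-1).\]
Since $1,\omega^2$ are linearly independent over $\mathbb{Q}$, this forces $B_1=B_2=\frac{3k+1}{2}$.

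\emph{Step 3 (reduce mod 3).} We have $B_1\equiv A_1$ and $B_2\equiv 2A_2 \pmod 3$. Also $\frac{3k+1}{2}\equiv 2 \pmod 3$, because $2\cdot 2\equiv 1$. Hence $2A_2\equiv 2$, so $A_2\equiv 1$, and by Step 1 $a\equiv 1 \pmod 3$. As a consistency check, $A_1\equiv 2$ gives $A_1+A_2\equiv 0\equiv 2a+1$. Since we are assuming $a\geqslant 2$, the congruence $a\equiv 1 \pmod 3$ gives $a\geqslant 4$.

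The main obstacle is Step 2. Step 1 by itself is consistent with every residue of $a$, so the derivative comparison is essential. It relies on the closed forms for $G'(1)$ and $H'(1)$ stated earlier. The only care needed is in simplifying $\omega/(1-\omega^2)$ and in the linear-independence argument over $\mathbb{Q}$.
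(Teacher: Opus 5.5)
Your proof is correct and is essentially the paper's argument: the decisive step is the same logarithmic-derivative comparison at $\omega$, once from the product form and once from $f(x)=G(x^3)-xH(x^3)$, which yields $B_1=B_2=\frac{3k+1}{2}$. Your Step 1 (comparing the values $f(\omega)$) is not needed: the paper finishes directly from $A_1\equiv B_1=B_2\equiv 2A_2 \pmod 3$ and $A_1+A_2=N_0=2a+1$, giving $2a+1\equiv 3A_2\equiv 0 \pmod 3$, which is essentially your ``consistency check'' used as the conclusion.
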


\begin{proof}
    Let 
    \[A=\sum\limits_{i\equiv 1(\text{mod }3)}in_i,\ B=\sum\limits_{i\equiv 2(\text{mod }3)}in_i;\]
    \[N_1=\sum\limits_{i\equiv 1(\text{mod }3)}n_i,\ N_2=\sum\limits_{i\equiv 2(\text{mod }3)}n_i.\]
    Let $\omega$ be a primitive third root of unity. Then the expression
    \[f(x)=\prod\limits_{i=1}^m(1-x^i)^{n_i}\]
    shows that 
    \[\frac{f'(\omega)}{f(\omega)}=-\frac{A}{1-\omega}-\frac{\omega B}{1-\omega^2}=\frac{\omega^2B-A}{1-\omega}.\]
    On the other hand, the expression 
    \[f(x)=G(x^3)-xH(x^3)\]
    implies that 
    \[\frac{f'(\omega)}{f(\omega)}=\frac{(3k+1)(\omega^2-1)}{2(1-\omega)}.\]
    Combining these identities, we obtain that $A=B=\frac{3k+1}{2}$.
    It follows that
    \[N_1\equiv A=B\equiv 2N_2\ (\text{mod }3).\]
    Therefore $2a+1=N_0=N_1+N_2\equiv 3N_2\equiv 0 (\text{mod }3)$, and the rest follows.
\end{proof}

\begin{prop}\label{prop p-adic}
    For any $0\leqslant j\leqslant a-2$, $p^{j+1}\mid D_{a-j}$.
\end{prop}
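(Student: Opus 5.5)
The approach is a descending induction on $r$ through the relations $b_r=0$, working in $\Z_{(p)}$ and tracking $p$-adic valuations. Set $u=\frac{2}{p-1}$; since $p$ is odd, $u$ and $2$ are units in $\Z_{(p)}$. Write $e_n:=v_p(p^nD_n)=n+v_p(D_n)$. In this language the claim reads $e_n\geqslant a+1$ for $2\leqslant n\leqslant a$, since $n=a-j$ with $0\leqslant j\leqslant a-2$ and $p^{j+1}\mid D_{a-j}$ means $e_{a-j}\geqslant a+1$. Each $D_n$ lies in $\Z$, so $e_n\geqslant n$ for free; in particular $e_n\geqslant a+1$ automatically for all $n\geqslant a+1$.

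By the previous proposition, the relation $b_r=0$ can be rewritten as
\[2^ru^rp^rD_r=-\sum_{n=r+1}^{2r+1}d_{n,r}u^np^nD_n,\]
valid for $0\leqslant r\leqslant \frac{a(p-1)}{2}-1=:R$. Because $d_{n,r}\in\Z$ and $2^ru^r$ is a unit, this gives
\[e_r\geqslant \min_{r<n\leqslant 2r+1} e_n.\]
I run this downward starting at $r=a$. For $r=a$ every index $n$ on the right exceeds $a$, so $e_a\geqslant a+1$. Assuming $e_n\geqslant a+1$ for all $n>r$, the same inequality gives $e_r\geqslant a+1$. Stopping at $r=2$ yields exactly $p^{a-r+1}\mid D_r$ for $2\leqslant r\leqslant a$, which is the proposition. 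Stopping there is forced: at $r=1$ the $r=0$ relation gives $D_1=-p^{a-1}/u$, so $e_1=a$ exactly.

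This argument needs $a\leqslant R$. For $p\geqslant 5$ we have $R\geqslant 2a-1\geqslant a$, so the case $p\geqslant 5$ is complete.

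The main obstacle is $p=3$. Then $u=1$ and $R=a-1$, so the relation at $r=a$ is unavailable and the induction cannot start. Running the relations $r\leqslant a-1$ downward still shows that everything reduces to the single statement $3\mid D_a$. The only term in relation $r=a-1$ not already of valuation $\geqslant a+1$ is $d_{a,a-1}3^aD_a$, and each smaller $r$ only involves indices $n\geqslant r+1$ that have already been handled.

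This is the one place where I would use the lemma that $a\equiv 1 \pmod 3$ when $p=3$ (so $a\geqslant 4$). My first attempt would be to get $3\mid D_a$ from Fermat periodicity. For $n\geqslant 1$ and $p=3$, $D_n \bmod 3$ depends only on the parity of $n$, because $x^n\equiv x^{n+2}\pmod 3$ for every integer $x$. We also have $D_1\equiv 0 \pmod 3$ since $a\geqslant 2$. So if $a$ is odd, then $D_a\equiv D_1\equiv 0$ and we are done.

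If $a$ is even, I would instead compute $d_{a,a-1}=\Delta^{a-1}P_a(0)$ explicitly from the top two coefficients of $P_a(x)=\binom{2x+1}{a}$. It has the form $2^{a-1}$ times a linear expression in $a$, and I would try to show it is divisible by $3$ using $a\equiv 1\pmod 3$. That makes the $D_a$ term in relation $r=a-1$ harmless and gives $e_{a-1}\geqslant a+1$ directly. The remaining point, $3\mid D_a$ itself when $a$ is even, would then need a further input. For this I would look at the even-order vanishing $\Phi^{(2r)}(0)=0$, which is automatic by antisymmetry, or at the top-order information $\Phi^{(2a+1)}(0)$, and check whether it provides the missing congruence.
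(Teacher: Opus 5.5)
Your $p\geqslant 5$ argument and your reduction of $p=3$ to $3\mid D_a$ match the paper's descending induction. So does your odd-$a$ case, via $D_a\equiv D_1\equiv 0 \pmod 3$. The gap is $p=3$ with $a$ even, which you leave open. The route you sketch for it fails. From the top two coefficients of $P_a(x)=\binom{2x+1}{a}$ one gets
\[d_{a,a-1}=2^{a-2}(a+1),\]
for instance $d_{4,3}=20$. Since $a\equiv 1 \pmod 3$, we have $a+1\equiv 2\pmod 3$, so $3\nmid d_{a,a-1}$. Even if $3$ did divide it, this would only neutralize $D_a$ in the relation $b_{a-1}=0$. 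The term $d_{a,r}3^aD_a$ still appears in $b_r$ for every $r$ with $r<a\leqslant 2r+1$. More basically, the $j=0$ instance of the statement is exactly $3\mid D_a$, so it cannot be bypassed. The even-order vanishing you mention is automatic by antisymmetry and carries no information.

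The missing input is the low-order relation $F(3)=0$, equivalently $b_1=0$; it is available because $1\leqslant a-1$. For $p=3$ it reads
\[D_0+9D_1+27D_2+27D_3=0,\]
and with $D_0=3^a$, $D_1=-3^{a-1}$ this gives $D_2+D_3=2\cdot 3^{a-3}$. This is where the lemma $a\equiv 1\pmod 3$ is used: it forces $a\geqslant 4$, so $3\mid D_2+D_3$. Your parity periodicity gives $D_3\equiv D_1\equiv 0\pmod 3$, hence $3\mid D_2$, and then $D_a\equiv D_2\equiv 0\pmod 3$ for even $a$. With $3\mid D_a$ in hand, your descending induction closes for $p=3$ exactly as you describe.
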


\begin{proof}
    We prove by induction on $j$.

    First note that for $i\in\{1,2\}$, if $n\equiv i(\text{mod } 2)$ then $x^n\equiv x^i(\text{mod } 3)$. If $p=3$ then either $D_a\equiv D_1(\text{mod } 3)$ or $D_a\equiv D_2(\text{mod } 3)$. Note that $D_0=3^a$, and that
    \[0=F(1)=D_0+3D_1.\]
    Consequently $D_1=-3^{a-1}$. By assumption $a\geqslant 2$, which implies that $3\mid D_1$. Moreover
    \[0=F(3)=D_0+9D_1+27D_2+27D_3,\]
    which implies that $D_2+D_3=2\cdot 3^{a-3}$. But then $D_3\equiv D_1(\text{mod }3)$, which implies that $3\mid D_3$. So $3\mid 2\cdot 3^{a-3}-D_3=D_2$. Combining these identities, we conclude that $3\mid D_a$.

    If $p>3$ then $\frac{a(p-1)}{2}-1\geqslant 2a-1\geqslant a$. Therefore
    \[0=b_a=2^a(\frac{2p}{p-1})^aD_a+\sum\limits_{n=a+1}^{2a+1}d_{n,a} (\frac{2p}{p-1})^n D_n.\]
    It follows that $p^{a+1}\mid 2^a(\frac{2p}{p-1})^a D_a$. Since $\gcd(2,p)=\gcd(p-1,p)=1$, we conclude that $p\mid D_a$.

    Suppose $p^{l+1}\mid D_{a-l}$ for all $0\leqslant l\leqslant j-1$, where $1\leqslant j\leqslant a-2$. By assumption
    \[0=b_{a-j}=\sum\limits_{n=a-j}^{2(a-j)+1}d_{n,a-j} (\frac{2p}{p-1})^n D_n.\]
    If $n\geqslant a+1$ then obviously $p^{a+1}\mid (\frac{2p}{p-1})^n$; if $a-j+1\leqslant n\leqslant a$ then by induction hypothesis $p^{a-n+1}\mid D_{n}$, and consequently $p^{a+1}=p^n\cdot p^{a-n+1}\mid (\frac{2p}{p-1})^n D_n$. It follows that $p^{a+1}\mid 2^{a-j}(\frac{2p}{p-1})^{a-j}D_{a-j}$, which further implies that $p^{j+1}\mid D_{a-j}$.
\end{proof}

\begin{thm}
    There is no good polynomial corresponding to $a\geqslant 2$.
\end{thm}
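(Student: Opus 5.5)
The plan is to derive a contradiction from the vanishing of $b_1$, comparing $p$-adic valuations in $\Z_{(p)}$. Assume $a\geqslant 2$. Then $\frac{a(p-1)}{2}-1\geqslant p-2\geqslant 1$, so $b_1=0$. By the proposition expressing $b_r$ through the $D_n$,
\[
0=b_1=d_{1,1}\Big(\frac{2p}{p-1}\Big)D_1+d_{2,1}\Big(\frac{2p}{p-1}\Big)^2D_2+d_{3,1}\Big(\frac{2p}{p-1}\Big)^3D_3 .
\]
Here $d_{1,1}=2$, and the $d_{n,1}$ are integers; in fact $d_{2,1}=3$ and $d_{3,1}=1$. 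Since $2$ and $p-1$ are units in $\Z_{(p)}$, I will show that the first term has $p$-adic valuation exactly $a$, while the other two have valuation at least $a+1$. This is impossible for a sum equal to zero.

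\emph{The first term.} The identity $F(1)=0$ reads $\frac{p-1}{2}D_0+pD_1=0$. Since $D_0=S=p^a$, this gives
\[
D_1=-\tfrac{p-1}{2}\,p^{a-1},
\]
so $v_p(D_1)=a-1$ exactly. Hence $v_p\big(2\cdot\frac{2p}{p-1}D_1\big)=1+(a-1)=a$.

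\emph{The second term.} Proposition \ref{prop p-adic} with $j=a-2$ (allowed because $a\geqslant2$) gives $p^{a-1}\mid D_2$. So the term with $D_2$ has valuation at least $2+(a-1)=a+1$.

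\emph{The third term.} If $a\geqslant 3$, Proposition \ref{prop p-adic} with $j=a-3$ gives $p^{a-2}\mid D_3$, so this term has valuation at least $3+(a-2)=a+1$. If $a=2$, it suffices that $D_3$ is an integer, which holds since the $g_i$ are integers; then the valuation is at least $3=a+1$. (For $p=3$ the earlier lemma already forces $a\geqslant4$, but this is not needed.)

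It follows that $b_1\equiv 2\cdot\frac{2p}{p-1}D_1\not\equiv 0 \pmod{p^{a+1}}$, contradicting $b_1=0$. Hence no good polynomial has $a\geqslant 2$. Together with the case $a=1$, this yields Theorem \ref{thm no-more}.

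The step I expect to need the most care is the bookkeeping for small $a$, where the inductive divisibilities of Proposition \ref{prop p-adic} do not apply to every index. I will also double-check that $r=1$ lies in the range where $b_r$ vanishes, and verify the values of $d_{n,1}$ directly from $d_{n,r}=\Delta^r P_n(0)$.
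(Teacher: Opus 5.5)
Your proof is correct and is essentially the paper's argument. Since $F(1)=0$, the relation $b_1=0$ is just $\bigl(\frac{2}{p-1}\bigr)^3F(3)=0$, which is the paper's identity \eqref{*} rescaled. You then bound $D_2$ and $D_3$ with Proposition \ref{prop p-adic} exactly as the paper does, including the same separate treatment of $a=2$, and compare against the term $-2p^a$ of valuation exactly $a$.
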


\begin{proof}
    Since $\frac{a(p-1)}{2}-1\geqslant 1$,
    \[0=F(3)=(\frac{p-1}{2})^3D_0+3(\frac{p-1}{2})^2p D_1+3(\frac{p-1}{2})p^2D_2+p^3D_3.\]
    Recall that $D_0=p^a$ and $D_1=-\frac{p-1}{2}p^{a-1}$. So
    \begin{equation}\label{*}
        3(\frac{p-1}{2})p^2 D_2+ p^3 D_3=2(\frac{p-1}{2})^3p^a.
    \end{equation}
    By Proposition \ref{prop p-adic}, $p^{a-1}\mid D_2$. If $a=2$ then the left hand side of \eqref{*} is divided by $p^3$; if $a\geqslant 3$ then again by Proposition \ref{prop p-adic}, $p^{a-2}\mid D_3$, which implies that the left hand side of \eqref{*} is divided by $p^{a+1}$. Either way we conclude that $p^{a+1}\mid 2(\frac{p-1}{2})^3p^a$, which is a contradiction.
\end{proof}

\section{Acknowledgments}
The authors would like to thank Mengying Hu and Professor Quanshui Wu for helpful discussions. Generative AI tools were used during the exploratory stage of this project to suggest possible approaches and to assist with checking mathematical manipulations, including Gemini 3.1 Deep Think and GPT--5.6 Sol via ChatGPT. All mathematical arguments in the final manuscript were independently verified by the authors, who take full responsibility for the content.

\bibliographystyle{amsalpha}
\bibliography{refs.bib}

@article {Ber01,
    AUTHOR = {Berger, R.},
     TITLE = {Koszulity for nonquadratic algebras},
   JOURNAL = {J. Algebra},
  FJOURNAL = {Journal of Algebra},
    VOLUME = {239},
      YEAR = {2001},
    NUMBER = {2},
     PAGES = {705--734},
      ISSN = {0021-8693,1090-266X},
   MRCLASS = {16S37},
  MRNUMBER = {1832913},
MRREVIEWER = {Dmitriy\ A.\ Rumynin},
       DOI = {10.1006/jabr.2000.8703},
       URL = {https://doi.org/10.1006/jabr.2000.8703},
}

@article {Kab25,
    AUTHOR = {Kabbaj, A.},
     TITLE = {Where are the {$N$}-{K}oszul algebras of finite global
              dimension?},
   JOURNAL = {Proc. Amer. Math. Soc.},
  FJOURNAL = {Proceedings of the American Mathematical Society},
    VOLUME = {153},
      YEAR = {2025},
    NUMBER = {3},
     PAGES = {1013--1024},
      ISSN = {0002-9939,1088-6826},
   MRCLASS = {16W50 (16P90)},
  MRNUMBER = {4862139},
MRREVIEWER = {Peter\ K\'alnai},
       DOI = {10.1090/proc/17103},
       URL = {https://doi.org/10.1090/proc/17103},
}

@article {DV07,
    AUTHOR = {Dubois-Violette, Michel},
     TITLE = {Multilinear forms and graded algebras},
   JOURNAL = {J. Algebra},
  FJOURNAL = {Journal of Algebra},
    VOLUME = {317},
      YEAR = {2007},
    NUMBER = {1},
     PAGES = {198--225},
      ISSN = {0021-8693,1090-266X},
   MRCLASS = {16E10 (16S37 16S38)},
  MRNUMBER = {2360146},
       DOI = {10.1016/j.jalgebra.2007.02.007},
       URL = {https://doi.org/10.1016/j.jalgebra.2007.02.007},
}

@article {MS16,
    AUTHOR = {Mori, I. and Smith, S. P.},
     TITLE = {{$m$}-{K}oszul {A}rtin-{S}chelter regular algebras},
   JOURNAL = {J. Algebra},
  FJOURNAL = {Journal of Algebra},
    VOLUME = {446},
      YEAR = {2016},
     PAGES = {373--399},
      ISSN = {0021-8693,1090-266X},
   MRCLASS = {16E65 (16S38)},
  MRNUMBER = {3421098},
MRREVIEWER = {Liyu\ Liu},
       DOI = {10.1016/j.jalgebra.2015.09.016},
       URL = {https://doi.org/10.1016/j.jalgebra.2015.09.016},
}

@book {PP05,
    AUTHOR = {Polishchuk, A. and Positselski, L.},
     TITLE = {Quadratic algebras},
    SERIES = {University Lecture Series},
    VOLUME = {37},
 PUBLISHER = {American Mathematical Society, Providence, RI},
      YEAR = {2005},
     PAGES = {xii+159},
      ISBN = {0-8218-3834-2},
   MRCLASS = {16S37 (16E30)},
  MRNUMBER = {2177131},
MRREVIEWER = {Ralf\ Fr\"oberg},
       DOI = {10.1090/ulect/037},
       URL = {https://doi.org/10.1090/ulect/037},
}

@article {AS87,
    AUTHOR = {Artin, M. and Schelter, W. F.},
     TITLE = {Graded algebras of global dimension {$3$}},
   JOURNAL = {Adv. in Math.},
  FJOURNAL = {Advances in Mathematics},
    VOLUME = {66},
      YEAR = {1987},
    NUMBER = {2},
     PAGES = {171--216},
      ISSN = {0001-8708},
   MRCLASS = {16A03 (16A48)},
  MRNUMBER = {917738},
MRREVIEWER = {Jean-Pierre\ van Deuren},
       DOI = {10.1016/0001-8708(87)90034-X},
       URL = {https://doi.org/10.1016/0001-8708(87)90034-X},
}

@article {SZ97,
    AUTHOR = {Stephenson, D. R. and Zhang, J. J.},
     TITLE = {Growth of graded {N}oetherian rings},
   JOURNAL = {Proc. Amer. Math. Soc.},
  FJOURNAL = {Proceedings of the American Mathematical Society},
    VOLUME = {125},
      YEAR = {1997},
    NUMBER = {6},
     PAGES = {1593--1605},
      ISSN = {0002-9939,1088-6826},
   MRCLASS = {16P90 (16E10 16P40 16W50)},
  MRNUMBER = {1371143},
MRREVIEWER = {Martha\ K.\ Smith},
       DOI = {10.1090/S0002-9939-97-03752-0},
       URL = {https://doi.org/10.1090/S0002-9939-97-03752-0},
}

@incollection {Rog24,
    AUTHOR = {Rogalski, D.},
     TITLE = {Artin-{S}chelter regular algebras},
 BOOKTITLE = {Recent advances in noncommutative algebra and geometry},
    SERIES = {Contemp. Math.},
    VOLUME = {801},
     PAGES = {195--241},
 PUBLISHER = {Amer. Math. Soc., [Providence], RI},
      YEAR = {[2024] \copyright 2024},
      ISBN = {978-1-4704-7239-9; [9781470476328]},
   MRCLASS = {16E05},
  MRNUMBER = {4756385},
MRREVIEWER = {Padmini\ Veerapen},
       DOI = {10.1090/conm/801/16084},
       URL = {https://doi.org/10.1090/conm/801/16084},
}

\end{document}